\numberwithin{equation}{section}
\def\R{\mathbb R}
\def\Z{\mathbb Z}
\def\C{\mathbb C}
\def\N{\mathbb N}
\def\E{\mathbb E}
\def\ee{\varepsilon}
\def\gcd{\operatorname{gcd}}
\DeclarePairedDelimiter\ceil{\lceil}{\rceil}
\DeclarePairedDelimiter\floor{\lfloor}{\rfloor}
\newtheorem{theorem}{Theorem}[section]
\newtheorem{lemma}[theorem]{Lemma}
\newtheorem{proposition}[theorem]{Proposition}
\newtheorem{corollary}[theorem]{Corollary}
\theoremstyle{remark}
\newtheorem{remark}[theorem]{Remark}
\theoremstyle{definition}
\theoremstyle{remark}
\numberwithin{equation}{section}
\begin{document}
\title{Paucity phenomena for polynomial products}
\author{Victor Y. Wang}
\address{Courant Institute, 251 Mercer Street, New York 10012, USA}
\address{IST Austria, Am Campus 1, 3400 Klosterneuburg, Austria}
\email{vywang@alum.mit.edu}
\author{Max Wenqiang Xu}
\address{Department of Mathematics, Stanford University, Stanford, CA, USA}
\email{maxxu@stanford.edu}
\subjclass{Primary 11K65; Secondary 11D45, 11D57, 11D79, 11N37}
\keywords{Multiplicative Diophantine equations, paucity, random multiplicative functions, Gaussian behavior}


\begin{abstract}
Let $P(x)\in \mathbb{Z}[x]$ be a polynomial with at least two distinct complex roots. We prove that the number of solutions $(x_1, \dots, x_k, y_1, \dots, y_k)\in [N]^{2k}$ to the equation
\[
    \prod_{1\le i \le k} P(x_i) = \prod_{1\le j \le k} P(y_j)\neq 0
\]
(for any $k\ge 1$)
is asymptotically $k!N^{k}$ as $N\to +\infty$.
This solves a question first proposed and studied by Najnudel.
The result can also be interpreted as saying that  all even moments of random partial sums $\frac{1}{\sqrt{N}}\sum_{n\le N}f(P(n))$ match standard complex Gaussian moments as $N\to +\infty$, where $f$ is the Steinhaus random multiplicative function.
\end{abstract}

\maketitle
\section{Introduction}

We are interested in counting solutions to Diophantine equations involving products of polynomial values.
Let $P(x) \in \Z[x]$ be a polynomial.
Consider the equation
\begin{equation}\label{eqn: S}
    \prod_{1\le i \le k} P(x_i) = \prod_{1\le j \le k} P(y_j)\neq 0
\end{equation}
for a given integer $k\ge 1$.
For integers $N\ge 1$, let $[N]:=\{1,2,\dots, N\}$ and define
\[A_{P, 2k}([N]) := \#\{(x_1, x_2, \dots, x_k, y_1, y_2, \dots, y_k)\in [N]^{2k}: \text{solutions to \eqref{eqn: S}} \}.\]
Call a solution to \eqref{eqn: S} \emph{trivial} if the first $k$ variables $x_1,\dots,x_k$ equal the last $k$ variables $y_1,\dots,y_k$ in pairs.
We say \eqref{eqn: S} has a \emph{paucity} of nontrivial solutions in $[N]^{2k}$ as $N\to +\infty$
if almost all solutions are trivial,
or equivalently, if $\lim_{N\to +\infty} (A_{P,2k}([N])/N^k) = k!$ holds.
One might expect a typical equation of the form \eqref{eqn: S} to have this paucity property,
which is a problem raised and studied in some special cases of $P(x)$ by Najnudel in \cite[\S5]{Naj}.
A recent result of Klurman, Shkredov, and Xu \cite[Proposition~1.4]{KSX} confirms that such phenomena exist in the $k=2$ case.

\subsection{Main results}

In this paper, we prove the paucity phenomenon occurs for all $P(x)$ satisfying mild necessary conditions. 
Let $P(x)\in \Z[x]$ have degree 
$d\ge 2$ and at least two distinct complex roots.
Let $e_P$ be the maximum multiplicity of any complex root of $P$; then $1\le e_P\le d-1$.
(For example, if $P(x) = x^2(x+1)$, then $e_P = 2$.)


\begin{theorem}
\label{THM:polynomial-paucity}
Let $P, d, e_P$ be as above.
Then for integers $k, N\ge 1$, we have
\[A_{P, 2k}([N])  =  k! N^{k} + O_{ P, k, \ee}(N^{k-1/6e_P+\ee}).  \]
\end{theorem}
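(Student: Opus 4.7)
My plan is to split
\[A_{P,2k}([N]) = T_{\mathrm{triv}}(N) + T_{\mathrm{nontriv}}(N)\]
according to whether $(x_1,\ldots,x_k)$ is a permutation of $(y_1,\ldots,y_k)$. Restricting to tuples with all $x_i$ distinct and applying inclusion--exclusion over coincidences yields
\[T_{\mathrm{triv}}(N) = k!\,N(N-1)\cdots(N-k+1) + O_k(N^{k-1}) = k!\,N^k + O_k(N^{k-1}),\]
which gives the main term. All the real work is in proving $T_{\mathrm{nontriv}}(N) = O_{P,k,\varepsilon}(N^{k-1/6e_P+\varepsilon})$ for the nontrivial solutions.

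For the nontrivial contribution I would first factor $P(x) = c\prod_s Q_s(x)^{m_s}$ into distinct irreducibles $Q_s \in \Z[x]$ and pass to the splitting field $K/\mathbb{Q}$, in which $P(x) = c\prod_\alpha (x-\alpha)^{m(\alpha)}$ with $e_P = \max_\alpha m(\alpha)$. Equation \eqref{eqn: S} then becomes an equality of principal fractional ideals in $\mathcal{O}_K$, so for every nonzero prime ideal $\mathfrak p \subset \mathcal{O}_K$,
\[
\sum_{i=1}^k \sum_\alpha m(\alpha)\,v_{\mathfrak p}(x_i-\alpha) = \sum_{j=1}^k \sum_\alpha m(\alpha)\,v_{\mathfrak p}(y_j-\alpha).
\]
The hypothesis that $P$ has at least two distinct roots $\alpha\neq\beta$ means that $(x-\alpha)$ and $(x-\beta)$ are coprime at every $\mathfrak p$ of sufficiently large residue characteristic (above the resultant of the distinct linear factors), and this is the mechanism that forces matchings between the two sides.

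The core counting step is then a large-prime matching argument. Fix a parameter $\delta$ of order $1/e_P$ and dyadically sort the prime ideals $\mathfrak p$ of residue characteristic $p > N^\delta$ appearing in $\prod_i P(x_i)$. A Hensel-lifting computation gives $\#\{x \in [N] : p^a \mid P(x)\} \ll_P N/p^{a/e_P} + p^a$, since a root of $P$ of multiplicity at most $e_P$ lifts to at most $p^{a(1-1/e_P)}$ residues modulo $p^a$; this is precisely where the factor $e_P$ enters the exponent. For each large prime, the highest prime-power divisor of some $P(x_i)$ forces an identification with a matching prime power on the $y$-side, reducing a $2k$-variable count to a $2(k-1)$-variable count via Cauchy--Schwarz, which one then iterates. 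The main obstacle will be controlling nontrivial solutions with no distinguishing large prime: either $\prod_i P(x_i)$ is unusually smooth, or every large prime divides several factors simultaneously. For the smooth case I would invoke a Rankin/sieve argument to bound the density of $x \in [N]$ for which $P(x)$ has no prime factor above $N^\delta$; for the repeated-prime case one can use the divisor bound $\tau(n) \ll n^\varepsilon$ to limit multiplicities. The exponent $1/(6e_P)$ should then drop out of balancing the Hensel loss $1-1/e_P$, the threshold $\delta$, and a Cauchy--Schwarz loss taken iteratively across the dyadic scales.
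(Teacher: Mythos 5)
Your decomposition into trivial and nontrivial solutions matches the paper, and your estimate $T_{\mathrm{triv}}(N) = k!N^k + O_k(N^{k-1})$ is exactly right. The general skeleton for the nontrivial count — distinguished large variable, congruence/Hensel bounds of the shape $\#\{x\in[N]:p^a\mid P(x)\}\ll N/p^{a/e_P}+\cdots$, induction from $2k$ down to $2(k-1)$ variables — is also the right one; the paper proves essentially your Hensel bound as Proposition~\ref{PROP:fixed-polynomial-divisibility}, via Huxley's congruence theorem for the squarefree kernel $Q$ of $P$, avoiding the passage to the splitting field you propose.

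However, there is a genuine gap: you never address the \emph{almost-trivial} nontrivial solutions, which are the hardest ones and which your listed cases (``unusually smooth'' and ``large prime divides several factors'') do not cover. After isolating the maximal variable $y_k$ and setting $z = P(y_k)$, the dangerous solutions are those for which some single $P(x_j)$ has $\gcd(P(x_j),z) \ge z/\lambda$ for $\lambda$ a small power of $N$ — equivalently $a\cdot P(y_k) = b\cdot P(x_j)$ for small $1\le a<b\le\lambda$. Here $P(x_j)$ need not be smooth and no prime need divide several factors, so your Rankin-sieve and divisor-bound arguments miss this case entirely; and ``the highest prime power of $P(x_j)$ matches one on the other side'' does not pin down $x_j$, since $N/p^{a/e_P}$ values of $x$ share that prime-power divisor and Cauchy--Schwarz alone gives no power saving against them. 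The paper handles this by a geometric input: Lemma~\ref{LEM:no-linear-factors} shows $a\cdot P(y)-b\cdot P(x)$ has no linear factor when $a\ne b$ (here the hypothesis of two distinct roots is used in an essential way), and then the Bombieri--Pila bound (Lemma~\ref{thm: BP}) gives $O_{d,\varepsilon}(N^{1/2+\varepsilon})$ integral points on each such curve. Without this ingredient, or some substitute giving a power-saving bound on near-proportionality events $aP(y)=bP(x)$, the induction does not close. A smaller issue: your Hensel count should read $p^{a(1-1/e_P)}$ rather than $p^a$ for the boundary term, but this is cosmetic compared to the missing almost-trivial analysis.
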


We prove Theorem~\ref{THM:polynomial-paucity} in \S\ref{SEC:proof-of-main-result}.
Related paucity results (for linear $P(x) = x-\alpha$ with $\alpha \in \mathbb{C}$ \emph{irrational}) were obtained independently by Bourgain, Garaev, Konyagin, and Shparlinski \cite[Corollary~27]{Bourgain2014} and Heap, Sahay, and Wooley \cite[Theorems~1.1 and~1.2]{HEAP2022}.
By considering certain algebraic norms, it may be possible to use our result (Theorem~\ref{THM:polynomial-paucity}) to obtain a version of those results (of \cite{Bourgain2014} and \cite{HEAP2022}), weaker in the error term.
However, we are not aware of any known or easy implication in the other direction.

\subsection{Linear case and optimality of the results}

The set of polynomials $P$ we allow in Theorem~\ref{THM:polynomial-paucity} is precisely $\Z[x]\setminus \{c(ax-r)^m: c,a,r\in \Z,\; m \in \mathbb{N} \}$.
Since \eqref{eqn: S} is ``homogeneous'' in $P$,
the condition $P(x)\neq c(ax-r)^m$ in Theorem~\ref{THM:polynomial-paucity} basically requires $P(x)$ to not be linear.
There has been a lot of literature in the case $P(x)=x$, e.g.~\cite{batyrev1995manin, Breteche, breteche2001toric, GS2001, Harperhigh, HarperHelson, HeapLind};
it is known that in this case, $A_{P,2k}([N])$ is larger than the number of trivial solutions by a power of $\log N$ with degree depending on $k$.
Thus, there is no paucity phenomenon in the linear case and our mild condition is necessary. 

However, we remark that in the companion paper \cite{WX2022} of the authors and Pandey, the analogous counting problem is solved for $P(x)=x$ with short interval support $[N, N+H]$, $H\le N/(\log N)^{Ck^2}$,\footnote{Previously, Bourgain, Garaev, Konyagin, and Shparlinski \cite[proof of Theorem~34]{Bourgain2014} handled a range of the form $H\le N/\exp(C_k\log{N}/\log\log{N})$.}
and this is one of the key steps in making progress towards a recent question of Harper \cite{Harper-moving}.
The \enquote{paucity} in this case comes from the \enquote{shortness} of the interval;
in comparison, the \enquote{paucity} in the present paper comes from the fact that $P(x)$ is ``genuinely of degree $\ge 2$'' (i.e.~it has at least two distinct complex roots).

\subsection{Application to random multiplicative functions}

One motivation for the study of Diophantine equations of the form \eqref{eqn: S} (when $P(x)=x$) comes from the moments of the Steinhaus random multiplicative function:
let $f(p)$ be independent random variables (for all primes $p$) taking values uniformly from the complex unit circle, and define $f(n)$ completely multiplicatively for all $n$.
We can use orthogonality to derive that
\[\E_f \Bigl|\sum_{n\le N}f(n)\Bigr|^{2k} = A_{P(n)=n, 2k}([N]). \]
Computing the moments here is exactly the same as counting solutions to \eqref{eqn: S}. 
As mentioned before, we have now a good understanding of the high moments of $\frac{1}{\sqrt{N}}\sum_{n\le N}f(n)$.
When the support of $f(n)$ is the full set  $[N]$, these moments blow up and grow like powers of $\log N $ as $k$ grows.
Our recent work  \cite{WX2022} with Pandey proves that when $f(n)$ is supported on short intervals $[N, N+H]$ with $H\to +\infty$ and $H\ll N/(\log N)^{Ck^{2}}$ as $N \to +\infty$, these moments are consistent with Gaussian moments for all $k\ge 1$.

Theorem~\ref{THM:polynomial-paucity} can be interpreted as the following moment asymptotic for the Steinhaus random multiplicative function $f$.
Suppose that $P(n)$ has a positive leading coefficient;
then there exists a constant $n_0\ge 0$ such that $P(n)>0$ for all $n>n_0$.
Then
\[\E_f \Bigl| \frac{1}{\sqrt{N}}\sum_{n_0<n\le N} f(P(n))\Bigr|^{2k} = k! + O_{k, P, \ee}(N^{-1/6e_P+\ee}).\]
The moments of random sums are strongly connected to their limiting distribution.
In particular, to prove the random sums have Gaussian limiting distribution, it is sufficient to show that all the moments match the  Gaussian moments (although this is not a necessary condition).
Thus, Theorem~\ref{THM:polynomial-paucity} potentially gives a way to establish the central limit theorem for 
$\frac{1}{\sqrt{N}}\sum_{n\le N} f(P(n))$ subject to establishing the odd moments,\footnote{Very recently, Besfort Shala noted that the odd moments are trivial to handle by the divisor bound.}
which could likely be done either by the methods of the present paper, or by adapting the treatment of analogous odd moments in \cite[end of \S3]{WX2022}.
In fact, Klurman, Shkredov, and Xu \cite{KSX} already proved that 
$\frac{1}{\sqrt{N}}\sum_{n\le N} f(P(n))$ has Gaussian limiting distribution by exploiting the martingale structure, which successfully reduces the task of establishing a central limit theorem to a fourth moment estimate. There is a lot more literature on the study of the distribution when $P(n)$ is linear and we refer readers to \cite{CS, SoundXu, Harper, Hough}.

\subsection{Notation}
Our notation is standard.
For any two functions $f, g: \R\to \R$, we write $f\ll g, g\gg f, g= \Omega(f)$ or $f = O(g)$ if there exists a positive constant $C$ such that $|f|\le C g$.
We use $O_k$ to indicate that the implied constant depends on $k$.
Let $\omega(n)$ be the number of distinct prime factors of $n$, and $\tau_k(n)$ be the number of ways to write $n$ as an ordered product of $k$ positive integers.
We use $\lfloor x \rfloor$ to denote the largest integer $\le x$.

\subsection*{Acknowledgement}
We thank Oleksiy Klurman, Ilya Shkredov, and Igor Shparlinski for helpful comments on earlier versions of the paper,
and thank Yotam Hendel for providing a reference for Lemma~\ref{LEM:univariate-polynomial-congruence-bound}.
We also thank the anonymous referee for their generous corrections and comments. 
The first author has received funding from the European Union's Horizon~2020 research and innovation program under the Marie Sk\l{}odowska-Curie Grant Agreement No.~101034413. The second author is partially supported by the Cuthbert C. Hurd Graduate Fellowship in the Mathematical Sciences, Stanford.

\section{Proof of the main result}
\label{SEC:proof-of-main-result}

The proof of Theorem~\ref{THM:polynomial-paucity} shares some ideas with the authors' recent work \cite{WX2022} with Pandey, particularly the use of ``factored'' congruence conditions and induction on $k$.\footnote{The work \cite[proof of Lemma~22 (for Lemma~23 and Theorem~34)]{Bourgain2014} also uses congruence conditions,
but does not ``factor'' them as we do in Corollary~\ref{COR:unlikely-P-divisor} below (and as in \cite[\S2 and \S3]{WX2022}).}
But the present proof also requires a novel use of
some algebra (e.g.~a result of Huxley \cite{HuxleyCongruence})
and algebraic geometry (e.g.~\cite[Theorem~4]{BP} of Bombieri and Pila),
among other ingredients.
More specifically, our argument requires us to separately consider three types of solutions: trivial solutions (for which $x_i$ and $y_j$ are equal in pairs), ``almost-trivial'' nontrivial solutions (for which some $\gcd(P(x_i), P(y_j))$ is very large), and ``generic'' nontrivial solutions. 
For the two types of nontrivial solutions, see especially Corollary~\ref{COR:unlikely-P-divisor} and its proof. Roughly speaking,
we use \cite{HuxleyCongruence} to show that there are relatively few ``generic'' nontrivial solutions,
while we use \cite[Theorem~4]{BP} to show that \emph{on average} there are relatively few ``almost-trivial'' nontrivial solutions.

We now build up to a proof of Theorem~\ref{THM:polynomial-paucity}.
Let $P, d, e_P$ be as in Theorem~\ref{THM:polynomial-paucity}.
Furthermore, assume that $P$ has positive leading coefficient.
(Theorem~\ref{THM:polynomial-paucity} is invariant under scaling $P$, so we may certainly make this assumption.)
Now fix an integer $n_0\ge 0$ such that $P(n)>0$ for all $n>n_0$.
By the divisor bound, \eqref{eqn: S} has at most $O_\ee(N^{k-1+\ee})$ solutions in $[N]^{2k}$ with $\min(x_1, \dots, x_k, y_1, \dots, y_k) \le n_0$.
Therefore, we may and do restrict attention to $n>n_0$.
By replacing $P(x)$ with $P(x+n_0)$, we may then assume $P(n)>0$ for all $n>0$, as we do from now on.

Let $Q(x)\in \Z[x]$ be a polynomial, with no repeated complex roots, such that
\begin{equation}
\label{COND:Q|P|Q^e_P}
    Q(x) \mid P(x) \mid Q(x)^{e_P}.
\end{equation}
Let $\Delta_Q\neq 0$ be the discriminant of $Q$.
(On a first reading, we suggest restricting to the special case $P(x) = x(x+1)$ considered by Najnudel in \cite[\S5]{Naj}, where $e_P = 1$ and one can take $Q(x) = x(x+1)$ with $\Delta_Q = 1$.)

The following result on polynomial congruences is due to Huxley \cite{HuxleyCongruence}.
See also the stronger result in Stewart \cite[Corollary~2]{StewartCongruence}, and the ensuing historical discussion there.

\begin{lemma}
[Huxley]
\label{LEM:univariate-polynomial-congruence-bound}
Let $P, d, e_P, Q, \Delta_Q$ be as above.
Let $\ell\ge 1$ be an integer.
Then
\begin{equation}
\label{INEQ:Huxley-bound}
\#\{x\bmod{\ell}: Q(x)\equiv 0\bmod{\ell}\}
\le d^{\omega(\ell)} \lvert \Delta_Q \rvert^{1/2}.
\end{equation}
\end{lemma}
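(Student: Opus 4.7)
The plan is to reduce, via the Chinese Remainder Theorem, to a local bound at each prime power dividing $\ell$, and then to handle primes dividing $\Delta_Q$ by $p$-adic analysis of the roots of $Q$. Writing $\rho_Q(m) := \#\{x \bmod m : Q(x) \equiv 0 \bmod m\}$, the function $\rho_Q$ is multiplicative by CRT, so $\rho_Q(\ell) = \prod_{p^a \| \ell} \rho_Q(p^a)$. It will therefore suffice to prove the local bound $\rho_Q(p^a) \le d \cdot p^{v_p(\Delta_Q)/2}$ for every prime power $p^a$; multiplying this over the $\omega(\ell)$ primes $p \mid \ell$ yields at once $\rho_Q(\ell) \le d^{\omega(\ell)} \prod_{p \mid \ell} p^{v_p(\Delta_Q)/2} \le d^{\omega(\ell)} |\Delta_Q|^{1/2}$, as required.

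For a \emph{good} prime $p \nmid \Delta_Q$, the reduction $\bar{Q} \in \mathbb{F}_p[x]$ is still squarefree of degree at most $d$, so it has at most $d$ roots modulo $p$, all simple. Hensel's lemma then lifts each such root uniquely to a root of $Q$ modulo $p^a$, so $\rho_Q(p^a) \le d$, and the local bound holds since $v_p(\Delta_Q) = 0$.

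The main obstacle is the \emph{bad} primes $p \mid \Delta_Q$, where roots of $Q$ can collide modulo $p$ and Hensel's lemma fails in its naive form. To handle these, I would pass to $\mathbb{Q}_p$ and factor $Q(x) = c \prod_{i=1}^{\deg Q}(x - \alpha_i)$ over $\overline{\mathbb{Q}_p}$, with the $\alpha_i$ distinct since $Q$ is squarefree. Extending the $p$-adic valuation $v_p$ canonically to $\overline{\mathbb{Q}_p}$, any solution $x \in \mathbb{Z}_p$ to $Q(x) \equiv 0 \bmod p^a$ must satisfy $\sum_i v_p(x - \alpha_i) \ge a - v_p(c)$. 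I would then partition solutions by which $\alpha_i$ is $p$-adically closest, bound the count of $x$ modulo $p^a$ in each resulting ball in terms of the valuations $v_p(\alpha_i - \alpha_j)$ for $j \neq i$, and sum. The classical discriminant identity $v_p(\Delta_Q) = (2\deg Q - 2) v_p(c) + 2\sum_{i < j} v_p(\alpha_i - \alpha_j)$ then explains the appearance of $p^{v_p(\Delta_Q)/2}$; the technical core lies in the bookkeeping around ramification of the local extensions containing the $\alpha_i$, which is precisely what produces the square root in the exponent.
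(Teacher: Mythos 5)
The paper does not actually prove this lemma from first principles: its entire proof consists of observing that $\deg Q \ge 2$ (forced by \eqref{COND:Q|P|Q^e_P} together with $P$ having $\ge 2$ distinct roots, so that Huxley's hypothesis applies) and then invoking Huxley \cite{HuxleyCongruence} verbatim, with Stewart cited as a stronger alternative. You have instead undertaken to re-derive the cited result. Your outer reduction is correct: by CRT, $\rho_Q$ is multiplicative, so a local bound $\rho_Q(p^a)\le (\deg Q)\,p^{v_p(\Delta_Q)/2}$ for each prime power would give the lemma (and you may replace $\deg Q$ by $d\ge\deg Q$). The good-prime case is genuinely trivial via Hensel. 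But the local inequality at bad primes \emph{is} Huxley's theorem, and your treatment of it stops at a sketch: you correctly identify the $p$-adic mechanism (sum of valuations $v_p(x-\alpha_i)\ge a-v_p(c)$, partition by nearest root, compare $\sum_{j\neq i}v_p(\alpha_i-\alpha_j)$ to the discriminant formula), but the two points that actually make the exponent $v_p(\Delta_Q)/2$ come out right are left unverified. Concretely: (i) one must show that for each $p$-integral root $\alpha_i$, the quantity $B_i:=v_p(c)+\sum_{j\ne i}v_p(\alpha_i-\alpha_j)$ satisfies $B_i\le v_p(\Delta_Q)/2$, which amounts to $v_p(\operatorname{disc}(Q/(x-\alpha_i)))\ge 0$ and uses the integrality of the quotient polynomial over $\mathcal O_{\mathbb{Q}_p(\alpha_i)}$ together with $\deg Q\ge 2$; and (ii) one must dispose of roots of negative valuation, noting that $x\in\mathbb{Z}_p$ can only be nearest to such an $\alpha_i$ if \emph{all} roots have negative valuation, in which case $v_p(Q(x))$ is constant and the count is handled separately. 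You flag this as "bookkeeping around ramification," which is fair, but it is precisely the content of the cited theorem rather than a routine appendix. So: correct strategy, essentially the standard proof of the result the paper cites, but with the core inequality asserted rather than established — whereas the paper sidesteps all of this by citation.
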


\begin{proof}
Since $P$ has $\ge 2$ distinct complex roots, the condition \eqref{COND:Q|P|Q^e_P} implies $\deg{Q}\ge 2$.
Since $\deg{Q}\ge 2$, the bound \eqref{INEQ:Huxley-bound} then follows directly from \cite{HuxleyCongruence}.
\end{proof}

This result leads to the following proposition:
\begin{proposition}
\label{PROP:fixed-polynomial-divisibility}
Let $P, d, e_P, Q, \Delta_Q$ be as above.
Let $z, N\ge 1$ be integers.
Then 
\begin{equation}\label{eqn: mod l}
\#\{x \in [N]: z\mid P(x)\}
\le d^{\omega(z)} \lvert \Delta_Q \rvert^{1/2}
\cdot (1 + N/z^{1/e_P}).
\end{equation}
\end{proposition}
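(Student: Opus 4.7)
The plan is to reduce the count of $x \in [N]$ with $z \mid P(x)$ to a corresponding count involving $Q$ rather than $P$, so that Huxley's bound (Lemma~\ref{LEM:univariate-polynomial-congruence-bound}) applies directly. The divisibility $P(x) \mid Q(x)^{e_P}$ from \eqref{COND:Q|P|Q^e_P} implies that any integer $x$ with $z \mid P(x)$ also satisfies $z \mid Q(x)^{e_P}$, so it suffices to bound $\#\{x \in [N] : z \mid Q(x)^{e_P}\}$.

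The next step is to replace the condition $z \mid Q(x)^{e_P}$ by a condition of the form $z' \mid Q(x)$ for a suitable integer $z'$. The natural choice is
\[
    z' := \prod_{p \mid z} p^{\lceil v_p(z)/e_P \rceil},
\]
i.e., the smallest positive integer such that $z \mid (z')^{e_P}$. For each prime $p \mid z$, the condition $p^{v_p(z)} \mid Q(x)^{e_P}$ gives $v_p(Q(x)) \ge v_p(z)/e_P$, hence $v_p(Q(x)) \ge \lceil v_p(z)/e_P \rceil$; multiplying over primes, $z' \mid Q(x)$. Two features of $z'$ are crucial: $\omega(z') = \omega(z)$ (same prime support), and $z' \ge z^{1/e_P}$ (since $(z')^{e_P} \ge z$).

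With this reduction in hand, I would apply Lemma~\ref{LEM:univariate-polynomial-congruence-bound} with $\ell = z'$ to conclude that there are at most $d^{\omega(z')} \lvert \Delta_Q \rvert^{1/2} = d^{\omega(z)} \lvert \Delta_Q \rvert^{1/2}$ residue classes $x \bmod z'$ for which $Q(x) \equiv 0 \bmod z'$. Each such class contributes at most $1 + N/z' \le 1 + N/z^{1/e_P}$ integers in $[N]$, and multiplying these two bounds yields \eqref{eqn: mod l}.

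The proof is essentially a bookkeeping argument once one identifies the correct auxiliary integer $z'$; there is no serious obstacle. The only nontrivial point is recognizing that $P | Q^{e_P}$ allows one to trade a factor of $z^{1-1/e_P}$ in the length of a single residue class for the (unavoidable) loss of counting roots of $Q$ rather than roots of $P$, and that passing from $z$ to $z'$ preserves $\omega$ so that the divisor-type factor $d^{\omega(z)}$ is unchanged.
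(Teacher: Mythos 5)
Your proof is correct and follows essentially the same route as the paper: the auxiliary integer $z'$ you construct is exactly the $\ell$ in the paper's proof (the smallest positive integer with $z \mid \ell^{e_P}$), the reduction to $z' \mid Q(x)$ and the application of Lemma~\ref{LEM:univariate-polynomial-congruence-bound} are identical, and the only cosmetic difference is that you note $\omega(z') = \omega(z)$ directly while the paper infers $d^{\omega(\ell)} \le d^{\omega(z)}$ from $\ell \mid z$.
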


\begin{proof}
Let $\ell$ be the smallest positive integer such that $z \mid \ell^{e_P}$.
Clearly $\ell\mid z$ and $\ell\ge z^{1/e_P}$.

If $x\in [N]$ and $z \mid P(x)$, then $z \mid Q(x)^{e_P}$ by \eqref{COND:Q|P|Q^e_P}, so $\ell \mid Q(x)$.
This implies that
\begin{equation}
\label{INEQ:P-mod-z-to-Q-mod-ell}
\#\{x \in [N]: z \mid P(x)\} \le \#\{x \in [N]: \ell \mid Q(x)\}.
\end{equation}
But a short calculation using Lemma~\ref{LEM:univariate-polynomial-congruence-bound} gives the bound
\begin{equation}
\label{INEQ:Q-mod-ell-incomplete-bound-via-complete-bound}
\#\{x \in [N]: \ell \mid Q(x)\}
\le d^{\omega(\ell)} \lvert \Delta_Q \rvert^{1/2}
\cdot \ceil{N/\ell}
\le d^{\omega(\ell)} \lvert \Delta_Q \rvert^{1/2}
\cdot (1 + N/\ell).
\end{equation}
(For $N=\ell$, the bound \eqref{INEQ:Q-mod-ell-incomplete-bound-via-complete-bound} follows directly from Lemma~\ref{LEM:univariate-polynomial-congruence-bound};
for $N<\ell$ it then follows by enlarging $N$ to $\ell$,
while for $N>\ell$ it follows by reduction modulo $\ell$.)

Since $\ell\mid z$ and $\ell\ge z^{1/e_P}$, the proposition follows from \eqref{INEQ:P-mod-z-to-Q-mod-ell} and \eqref{INEQ:Q-mod-ell-incomplete-bound-via-complete-bound}.
\end{proof}
\begin{remark}
By using \cite[Theorem 1]{KS94} instead of ``enlarging $N$ to $\ell$'' (in the case $N<\ell$),
the right-hand side of \eqref{eqn: mod l} could be improved to $d^{\omega(z)}\cdot (1 + \lvert \Delta_Q \rvert^{1/2} N/z^{1/e_P})$, provided the content of $Q$ (the greatest common divisor of the coefficients of $Q$) is relatively prime to $\ell$.
This would save roughly a factor of $\lvert \Delta_Q \rvert^{1/2}$ when $N\asymp z^{1/e_P}/\lvert \Delta_Q \rvert^{1/2}$. 
But \eqref{eqn: mod l} is sufficient for our purposes (and does not require the content of $Q$ to be coprime to $\ell$). 
\end{remark}

We next deduce the following general counting result.
Here and later, we let
\begin{equation}
\label{EQN:define-lambda-large-gcd-count}
G_{P,\lambda}([N], z) :=
\#\{(x,a,b)\in [N] \times [\lambda]^2:
a\cdot z = b\cdot P(x)\text{ and }a<b\},
\end{equation}
whenever $z,N,\lambda\in \N$.


\begin{corollary}
\label{COR:unlikely-P-divisor}
Let $P, d, e_P, Q, \Delta_Q$ be as above.
Let $k, z, N\ge 1$ be integers.
Let
\begin{equation}
\label{quantity-T}
\begin{split}
T: = \#\{(x_1,x_2,\dots,x_k)\in [N]^k:{}
&z\mid P(x_1)P(x_2)\cdots P(x_k), \\
&P(x_1),P(x_2),\dots,P(x_k) < z\}.
\end{split}
\end{equation}
Then for any integer $\lambda\ge 1$, we have
\begin{equation}\label{eqn: 2.6}
T \le
k\cdot G_{P,\lambda}([N], z)\cdot N^{k-1}
+ \tau_k(z)\cdot O(d^{\omega(z)})^k \lvert \Delta_Q \rvert^{k/2}
\cdot \left(\frac{N^k}{z^{1/e_P}} + \frac{N^{k-1}}{\lambda^{1/e_P}} + N^{k-2}\right).
\end{equation}
\end{corollary}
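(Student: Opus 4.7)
The plan is to split $T=T_A+T_B$ according to the size of $\gcd(z,P(x_i))$: let $T_A$ count those tuples in $T$ for which at least one index $i$ satisfies $\gcd(z,P(x_i))\ge z/\lambda$, and let $T_B$ count the rest. The piece $T_A$ will produce the $k\,G_{P,\lambda}([N],z)\,N^{k-1}$ contribution, while $T_B$ will yield the $\tau_k(z)$-error after applying Proposition~\ref{PROP:fixed-polynomial-divisibility}.

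For $T_A$, fix any index $i$ witnessing $\gcd(z,P(x_i))\ge z/\lambda$. Writing $g=\gcd(z,P(x_i))$, $a=P(x_i)/g$ and $b=z/g$, we have $az=bP(x_i)$ by construction, $a<b$ from $P(x_i)<z$, and $b\le\lambda$ from $g\ge z/\lambda$; hence the map $x_i\mapsto(x_i,a,b)$ injects these values of $x_i$ into the set counted by $G_{P,\lambda}([N],z)$. Letting the remaining $k-1$ coordinates range freely in $[N]$ and summing over $i\in[k]$ yields $T_A\le k\,G_{P,\lambda}([N],z)\,N^{k-1}$.

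For $T_B$, each tuple admits at least one factorization $z=z_1\cdots z_k$ with $z_i\mid P(x_i)$ (distribute the prime-power valuations of $z$ across the $P(x_i)$'s), and for any such factorization $z_i\le\gcd(z,P(x_i))<z/\lambda$. Upper-bounding the indicator of $T_B$ by the number of such factorizations and swapping the order of summation gives
\[
T_B \le \sum_{\substack{z_1\cdots z_k=z \\ z_i<z/\lambda\ \forall i}}\prod_{i=1}^{k}\#\{x\in[N]:z_i\mid P(x)\}.
\]
Applying Proposition~\ref{PROP:fixed-polynomial-divisibility} to each factor, using $\sum_i\omega(z_i)\le k\omega(z)$ to collect $\prod d^{\omega(z_i)}\le(d^{\omega(z)})^k$, and pulling out $|\Delta_Q|^{k/2}$, the task reduces to bounding $\sum_{\text{fact.}}\prod_i(1+N/z_i^{1/e_P})$ over the restricted factorizations.

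Expanding the product over subsets $S\subseteq[k]$ produces summands $N^{|S|}(m/z)^{1/e_P}$ where $m=\prod_{i\notin S}z_i$. The case $S=[k]$ gives $\tau_k(z)\,N^k/z^{1/e_P}$; the case $|S|=k-1$ with single omitted index $j$ uses the restriction $z_j<z/\lambda$ to obtain $(z_j/z)^{1/e_P}<\lambda^{-1/e_P}$, and combined with the identity $\sum_{w\mid z}\tau_{k-1}(z/w)=\tau_k(z)$ yields $O(k\,\tau_k(z)\,N^{k-1}/\lambda^{1/e_P})$; the case $|S|\le k-2$ uses the crude bound $(m/z)^{1/e_P}\le 1$ together with $\sum_r\binom{k}{r}\le 2^k$ to give $O(2^k\,\tau_k(z)\,N^{k-2})$. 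The factors of $k$ and $2^k$ are absorbed into the $O(d^{\omega(z)})^k$ constant, matching \eqref{eqn: 2.6}. The main subtlety is that the factorization restriction $z_j<z/\lambda$ must be applied to exactly one ``omitted'' index at a time to extract the $\lambda^{-1/e_P}$ saving, while still preserving the standard divisor-sum identity that produces $\tau_k(z)$ as the factorization count.
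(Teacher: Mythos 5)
Your proof is correct and takes essentially the same approach as the paper: you split $T$ according to whether some $\gcd(z,P(x_i))$ is large, feed the large-gcd piece into $G_{P,\lambda}$ exactly as the paper does (with the same $a=P(x_i)/g$, $b=z/g$ bookkeeping), and bound the small-gcd piece by distributing $z$ into a factorization $z=z_1\cdots z_k$ with $z_i<z/\lambda$ and invoking Proposition~\ref{PROP:fixed-polynomial-divisibility} multiplicatively. The only cosmetic difference is that you expand $\prod_i(1+N/z_i^{1/e_P})$ over subsets $S\subseteq[k]$ and classify by $|S|$, whereas the paper rewrites this product as $N^k/\prod_i\min(N,z_i^{1/e_P})$ and classifies by how many factors in the denominator are $N$; these are two ways of organizing the identical trichotomy $N^k/z^{1/e_P}$, $N^{k-1}/\lambda^{1/e_P}$, $N^{k-2}$, with the extra factors $k$ and $2^k$ absorbed into $O(d^{\omega(z)})^k$ in both treatments.
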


\begin{proof}
For each $j\in [k]$, let $I_j$ be the contribution to $T$ from tuples $(x_1,\dots,x_k)$ with $\gcd(P(x_j), z) \ge z/\lambda$.
Note that if $x\in [N]$ has the properties $\gcd(P(x), z) \ge z/\lambda$ and $P(x) < z$,
then $\gcd(P(x), z) = z/b$ for some positive integer $b\le \lambda$ dividing $z$, and thus $P(x) = a\cdot z/b$ for some positive integer $a<b$,
since $P(x)>0$.
It follows that for each $j\in [k]$, we have
\begin{equation}
\label{INEQ:isolated-gcd-bound}
I_j\le G_{P,\lambda}([N], z)\cdot N^{k-1}.
\end{equation}

Now let $S$ be the contribution to $T$ from tuples $(x_1,\dots,x_k)$ with $\gcd(P(x_j),z) < z/\lambda$ for all $j\in [k]$.
Given positive integers $u_1,\dots,u_k$ with $u_1u_2\cdots u_k = z$,
let $S_{u_1,\dots,u_k}$ be the contribution to $T$ from tuples $(x_1,\dots,x_k)$ with $\gcd(P(x_j),z) < z/\lambda$ and $u_j\mid P(x_j)$ for all $j\in [k]$.
Then $S_{u_1,\dots,u_k} = 0$ unless $u_1,\dots,u_k < z/\lambda$, in which case we must have $k\ge 2$ and (by Proposition~\ref{PROP:fixed-polynomial-divisibility})
\begin{equation*}
\begin{split}
S_{u_1,\dots,u_k}
&\le O(d^{\omega(z)})^k \lvert \Delta_Q \rvert^{k/2}
\cdot \frac{N^k}{\prod_{j\in [k]} \min(N, u_j^{1/e_P})} \\
&\le O(d^{\omega(z)})^k \lvert \Delta_Q \rvert^{k/2}
\cdot \left(N^{k-2} + \frac{N^{k-1}}{\min_{i\in [k]}(z/u_i)^{1/e_P}} + \frac{N^k}{z^{1/e_P}}\right),
\end{split}
\end{equation*}
because $\prod_{j\in [k]} \min(N, u_j^{1/e_P})$ is either $\ge N^2$,
or equal to $N\prod_{j\in [k]\setminus \{i\}} u_j^{1/e_P} = N (z/u_i)^{1/e_P}$,
or else equal to $\prod_{j\in [k]} u_j^{1/e_P} = z^{1/e_P}$.
Therefore,
\begin{equation}
S_{u_1,\dots,u_k}
\le O(d^{\omega(z)})^k \lvert \Delta_Q \rvert^{k/2}
\cdot \left(N^{k-2} + \frac{N^{k-1}}{\lambda^{1/e_P}} + \frac{N^k}{z^{1/e_P}}\right),
\end{equation}
since we may assume $u_1,\dots,u_k < z/\lambda$.
As $S\le \sum_{u_1u_2\cdots u_k = z} S_{u_1,\dots,u_k}$, we conclude that
\begin{equation}
\label{INEQ:split-gcd-total-bound}
S \le \tau_k(z)
\cdot O(d^{\omega(z)})^k \lvert \Delta_Q \rvert^{k/2}
\cdot \left(N^{k-2} + \frac{N^{k-1}}{\lambda^{1/e_P}} + \frac{N^k}{z^{1/e_P}}\right).
\end{equation}
Since $T\le I_1+\dots+I_k + S$, the desired result follows from \eqref{INEQ:isolated-gcd-bound} and \eqref{INEQ:split-gcd-total-bound}.
\end{proof}

To further estimate the upper bound in \eqref{eqn: 2.6} for the quantity \eqref{quantity-T},
we show that on average over certain values of $z$, the quantity $G_{P,\lambda}([N], z)$ appearing in \eqref{eqn: 2.6} is not large.
We show this in Lemma~\ref{LEM:BP-bound}, with Lemma~\ref{LEM:no-linear-factors} and Lemma~\ref{thm: BP} as preparations. 

Lemmas~\ref{LEM:no-linear-factors} and~\ref{thm: BP} imply that when $a\neq b$,
the polynomial $a\cdot P(y) - b \cdot P(x)$ has no linear factor, and therefore has few integral zeros in boxes.
Lemma~\ref{thm: BP} is a well-known counting result due to Bombieri and Pila \cite[Theorem~4]{BP}, which is also used in \cite{KSX}. 

\begin{lemma}
\label{LEM:no-linear-factors}
Let $P, d$ be as above.
Let $a,b$ be distinct positive integers.
Then the polynomial $a\cdot P(y) - b\cdot P(x)$ has no factor in $\C[x,y]$ of degree $1$.
\end{lemma}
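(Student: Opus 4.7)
I would argue by contradiction. Suppose $L(x,y)=\alpha x+\beta y+\gamma\in\C[x,y]$ with $(\alpha,\beta)\neq(0,0)$ divides $aP(y)-bP(x)$. The degenerate cases are immediate: if $\alpha=0$, then substituting $y=-\gamma/\beta$ into the divisibility forces $bP(x)=aP(-\gamma/\beta)$ as polynomials in $x$, making $P$ constant (since $b>0$) and contradicting $\deg P\geq 2$; the case $\beta=0$ is symmetric.

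In the remaining case $\alpha,\beta\neq 0$, I would parametrize the zero set of $L$ by $y=\mu x+\nu$ with $\mu=-\alpha/\beta\neq 0$, translating the divisibility $L\mid aP(y)-bP(x)$ into the polynomial identity $aP(\mu x+\nu)=bP(x)$ in $\C[x]$. Comparing leading coefficients immediately yields $a\mu^{d}=b$, so the subcase $\mu=1$ forces $a=b$ and is already excluded. The heart of the argument is then to compare root multisets: if $\rho_1,\dots,\rho_d$ are the roots of $P$ with multiplicity, the identity says that the affine map $T(\rho):=(\rho-\nu)/\mu$ permutes the multiset $\{\rho_1,\dots,\rho_d\}$, and hence also permutes the finite set $R$ of distinct roots.

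Since $\mu\neq 1$, the map $T$ has a unique fixed point $\rho_0=-\nu/(\mu-1)$, and after shifting by $-\rho_0$ it becomes $\sigma\mapsto\sigma/\mu$. As $P$ has $|R|\geq 2$ distinct roots, not all elements of $R$ coincide with $\rho_0$, so there is some $\rho\in R$ with $\sigma:=\rho-\rho_0\neq 0$. Since $T$ permutes the finite set $R$, some iterate $T^{n}$ acts as the identity on $R$, and applying this to $\rho$ yields $\sigma/\mu^{n}=\sigma$, hence $\mu^{n}=1$. I expect this finite-orbit step to be the main (and essentially only) obstacle; the conclusion is then immediate, because $\mu^{d}=b/a$ is a positive rational and the only positive rational root of unity is $1$, forcing $a=b$ and contradicting $a\neq b$.
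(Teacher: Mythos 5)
Your proof is correct and follows essentially the same route as the paper: both reduce the linear-factor hypothesis to the polynomial identity $aP(\mu x+\nu)=bP(x)$, observe that the resulting affine map permutes the (finitely many) roots of $P$, and derive a contradiction from the leading-coefficient relation $\mu^{d}=b/a\neq 1$. The paper phrases the final step via $\lvert\mu\rvert\neq 1$ and the unique fixed point of a high iterate $L^{d!}$, whereas you phrase it via $\mu$ being a root of unity incompatible with $b/a$ being a positive rational $\neq 1$, but these are the same idea.
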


\begin{proof}
Suppose for contradiction that $a\cdot P(y) - b\cdot P(x)$ has a linear factor, say $fx+gy+h$ (with $f,g,h \in \C$ and $(f,g) \neq (0,0)$).
By symmetry, we may assume $g\neq 0$.
By scaling, we may then assume $g = -1$.
The factor theorem in $(\C[x])[y]$ now implies
\[ a P(fx+h) = b P(x). \]
Thus $f^d = b/a$ (so $|f| = (b/a)^{1/d} \notin \{0,1\}$),
and the (invertible) affine map $L(x) := fx+h$ on $\C$ permutes the roots of $P$.
Thus each root of $P$ is fixed by $L^{d!}$.
But since $|f| \neq 1$, the map $L^{d!} = f^{d!}x + O_{d,f,h}(1)$ has at most one fixed point.
Thus $P$ has at most one root, contradicting our original requirement that $P$ have at least two distinct roots.
\end{proof}

\begin{lemma}[Bombieri--Pila {\cite[Theorem~4]{BP}}]\label{thm: BP}
Let $\mathcal{C}$ be an absolutely irreducible curve (over the rationals) of degree $r\ge 2$.
If $N\ge \exp(r^{6})$,
then the number of  integral points on $\mathcal{C}$ and inside the square $[0, N] \times [0, N]$ does not exceed $N^{1/r} \exp( 12 \sqrt{r \log N \log \log N})$.
\end{lemma}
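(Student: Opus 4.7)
Since this is a well-known result of Bombieri--Pila and the authors use it as a black box, the natural plan is to reprove it by their determinant method. The overall strategy is to cover the curve $\mathcal{C}$ by a bounded number of short analytic arcs, then show that on any such arc the integer points must satisfy an auxiliary polynomial equation of controlled degree $D$, and finally apply Bezout's theorem to bound the number of such integer points. Let $F(x,y)=0$ be the defining equation of $\mathcal{C}$, of degree $r$. After discarding a negligible $O(r^2)$ singular and critical points, away from the exceptional locus the curve can be locally parametrized in real-analytic (in fact, algebraic) form $y=\phi(x)$ on each arc, with good control over derivatives of $\phi$.

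The heart of the method is the following determinant step. Choose a degree parameter $D \ge r$ and let $s := \binom{D+2}{2}$ be the dimension of the space of polynomials of degree at most $D$ in two variables. Suppose an arc contains at least $s$ integer points $(x_1,y_1),\dots,(x_s,y_s)$. Form the $s\times s$ matrix $M$ whose entry in row $i$, column $(a,b)$ is $x_i^a y_i^b$, indexed by monomials $x^a y^b$ with $a+b\le D$. On the one hand, $\det(M)\in\Z$. On the other hand, substituting $y_i=\phi(x_i)$ and expanding each row as a Taylor series around the midpoint of the arc, one sees that $\det(M)$ is bounded above by a factor of the form $N^{O(D^2)} L^{\binom{s}{2}}$, where $L$ denotes the length of the arc (the small factor comes from the vanishing of the leading terms in the Vandermonde-like expansion). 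If $L$ is chosen small enough, the upper bound is strictly less than $1$, forcing $\det(M)=0$ and hence the existence of a nonzero polynomial $G(x,y)$ of degree $\le D$ vanishing at all the integer points on the arc.

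Now invoke absolute irreducibility of $\mathcal{C}$: since $F$ is absolutely irreducible and $\deg G\le D$, if $G$ is not a multiple of $F$ (which it cannot be when $D<r$, and in the general case one can argue inductively on components), then $F$ and $G$ share no common factor, so by Bezout's theorem their intersection contains at most $rD$ points. Thus each arc contributes $O(rD)$ integer points, and the total count is bounded by (number of arcs)$\times rD$, plus the negligible exceptional contribution. Optimizing $D$ and the number of arcs as functions of $N$ and $r$ yields the stated bound $N^{1/r}\exp(12\sqrt{r\log N\log\log N})$; the $N^{1/r}$ comes from balancing the length $L\sim N/(\text{number of arcs})$ against the determinant threshold, and the subpolynomial correction absorbs the $rD$ Bezout factor and various $N^{O(1/D)}$ losses.

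The main obstacle is the quantitative analytic control in the determinant estimate: one must rigorously bound $\det(M)$ using the high-order vanishing of the Taylor expansions, and this requires uniform bounds on derivatives of the local parametrization $\phi$, which in turn depend on staying away from the singular locus and on subdividing the curve into $O_r(1)$ monotone, convex pieces. The hypothesis $N\ge\exp(r^6)$ in the statement is what allows all the implicit constants (depending on $r$) and the exceptional point counts to be swallowed into the stated error factor $\exp(12\sqrt{r\log N\log\log N})$.
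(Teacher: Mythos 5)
The paper does not prove this lemma; it is quoted directly from Bombieri--Pila \cite[Theorem~4]{BP} and used as a black box, so there is no in-paper argument to compare against. Your sketch does correctly identify the method of the cited source -- the determinant (Bombieri--Pila) method: partition the curve into short arcs after removing $O_r(1)$ bad points, show via a Taylor-expanded determinant estimate that the integer points on each arc lie on an auxiliary curve of controlled degree $D$, and finish with Bezout. That is the right road map.

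As a self-contained proof, though, there is a genuine gap at the Bezout step. You take $D\ge r$, but then assert that $G$ cannot be a multiple of $F$ ``when $D<r$'' -- which is internally inconsistent -- and the fallback ``argue inductively on components'' is a placeholder, not an argument. When $D\ge r$ nothing in the determinant construction as you have set it up prevents the auxiliary polynomial from being $G=F\cdot H$, in which case $F$ and $G$ share the component $\mathcal{C}$ and Bezout yields no bound at all. The standard fix (used in \cite{BP} and its descendants) is to work in a reduced monomial basis -- e.g.\ restricting the $y$-exponent to be $<r$, equivalently reducing modulo $F$ -- so that the resulting nonzero $G$ automatically lies outside the ideal $(F)$; your write-up needs some such device, explicitly. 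Beyond that, the quantitative heart of the theorem -- deriving the precise exponent $12\sqrt{r\log N\log\log N}$ and explaining the role of the threshold $N\ge\exp(r^6)$ -- is only asserted, not carried out, and it is exactly this optimization (balancing arc length $L$, degree $D$, and the number of arcs) that produces the shape of the bound. None of this affects the paper, which legitimately cites the result rather than reproving it.
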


Recall the definition of $G_{P,\lambda}([N],z)$ from \eqref{EQN:define-lambda-large-gcd-count}.
The following lemma shows that $G_{P, \lambda}([N], P(y))$ is not large when we average over $y$.

\begin{lemma}
\label{LEM:BP-bound}
Let $P, d$ be as above.
Let $N, \lambda\ge 1$ be integers.
Then
\begin{equation*}
\sum_{y\in [N]} G_{P,\lambda}([N], P(y))
\ll_{d, \ee} \lambda^2\cdot N^{1/2 + \ee}.
\end{equation*}
\end{lemma}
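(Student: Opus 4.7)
The plan is to open the definition of $G_{P,\lambda}([N], P(y))$, swap the order of summation, and reduce to counting integer points on plane curves of the shape $aP(y) = bP(x)$. Since $P$ is positive on $[N]$, the left-hand side equals
\[
\sum_{1 \le a < b \le \lambda} N_{a,b},
\qquad
N_{a,b} := \#\{(x,y) \in [N]^2 : a \cdot P(y) = b \cdot P(x)\}.
\]
There are at most $\lambda^2$ such pairs $(a,b)$, so the task reduces to proving $N_{a,b} \ll_{d,\ee} N^{1/2+\ee}$ uniformly in $(a,b)$ with $a \neq b$, after which one simply sums over pairs.

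I would fix such a pair and study the polynomial $F(x,y) := aP(y) - bP(x) \in \mathbb{Q}[x,y]$, which has total degree $d$. First factor $F$ into $\mathbb{Q}$-irreducible pieces, then decompose each piece into its absolutely irreducible components over $\overline{\mathbb{Q}}$. By Lemma~\ref{LEM:no-linear-factors} no absolutely irreducible component is linear, so every such component has degree $r \ge 2$. For a $\mathbb{Q}$-irreducible factor that is itself absolutely irreducible, Lemma~\ref{thm: BP} (Bombieri--Pila) applied to its zero curve gives at most $N^{1/r}\exp(O(\sqrt{r\log N \log\log N})) \ll_{d,\ee} N^{1/2+\ee}$ integer points in $[0,N]^2$, once $N \ge \exp(d^6)$. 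For a $\mathbb{Q}$-irreducible factor that splits further over $\overline{\mathbb{Q}}$, the distinct absolutely irreducible components are Galois conjugates of one another, and any rational point on the zero set must be fixed by Galois and therefore lies on every conjugate component simultaneously; B\'ezout then bounds such points by $\ll_d 1$. Summing the contributions of the $\le d$ factors yields $N_{a,b} \ll_{d,\ee} N^{1/2+\ee}$ in the range $N \ge \exp(d^6)$, while for smaller $N$ the trivial estimate $N_{a,b} \le N^2$ is already $\ll_d N^{1/2+\ee}$.

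Summing over the $\le \lambda^2$ pairs $(a,b)$ then gives $\ll_{d,\ee} \lambda^2 N^{1/2+\ee}$, as desired. The main obstacle I foresee is verifying the hypothesis of Bombieri--Pila: one needs the absolutely irreducible components of $aP(y) - bP(x)$ to have degree $\ge 2$, which is exactly the content of Lemma~\ref{LEM:no-linear-factors}, and one must separately dispatch the $\mathbb{Q}$-irreducible factors that fail to be absolutely irreducible, which the Galois-conjugate B\'ezout argument handles cleanly. Beyond those algebraic inputs, the remainder of the argument is essentially bookkeeping and uniformity in $(a,b)$, the latter being automatic from the coefficient-free form of Lemma~\ref{thm: BP}.
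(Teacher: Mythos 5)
Your argument is correct and follows the paper's route exactly: unwind the definition of $G_{P,\lambda}$, reduce to counting integer points on the curves $aP(y)=bP(x)$ for $a<b\le\lambda$, rule out linear components via Lemma~\ref{LEM:no-linear-factors}, and apply the Bombieri--Pila bound of Lemma~\ref{thm: BP}. You are in fact slightly more careful than the paper's terse proof, since you explicitly address the absolute-irreducibility hypothesis of Bombieri--Pila by disposing of any $\mathbb{Q}$-irreducible but not absolutely irreducible factors via the Galois-conjugate/B\'ezout argument, a point the paper elides but that a complete writeup would need to include.
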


\begin{proof}
Plugging in \eqref{EQN:define-lambda-large-gcd-count}, we see that the lemma concerns the number of solutions $(x,y) \in [N]^2$ to $a\cdot P(y) = b\cdot P(x)$ in total over positive integers $a,b\le \lambda$ with $a<b$.
Consider any such integers $a,b$.
By Lemma~\ref{LEM:no-linear-factors},
each irreducible complex component of the curve $a\cdot P(y) - b\cdot P(x) = 0$ has degree $\ge 2$, and thus by Lemma~\ref{thm: BP}, at most $O_{d, \ee}(N^{1/2 + \ee})$ integral points $(x,y) \in [N]^2$.
The lemma follows upon summing over $a,b$.
\end{proof}

With Corollary~\ref{COR:unlikely-P-divisor} and Lemma~\ref{LEM:BP-bound}, we are ready to prove Theorem~\ref{THM:polynomial-paucity}.
The proof proceeds by using a ``symmetry, congruence condition, and induction'' strategy.
\begin{proof}
[Proof of Theorem~\ref{THM:polynomial-paucity}]
We call a solution to \eqref{eqn: S} \emph{trivial} if $(x_1,\dots,x_k)$ equals a permutation of $(y_1,\dots,y_k)$.
Also, as explained near the beginning of \S\ref{SEC:proof-of-main-result},
we assume $P(n)>0$ for $n>0$.
Therefore, the contribution to $A_{P,2k}([N])$ from trivial solutions is
\begin{equation}
\label{INEQ:trivial-P-solutions-estimate}
k! N^k + O_k(N^{k-1}),
\end{equation}
since it is at most $k! N^k$ and at least $k! N(N-1)\cdots (N-k+1)$.
It remains to bound $N_{P,2k}([N])$, the number of nontrivial solutions $(x_1, \dots, x_k, y_1, \dots, y_k)\in [N]^{2k}$ to \eqref{eqn: S}.

Let $R_{P, 2k}([N])$ be the contribution to $N_{P, 2k}([N])$ from tuples $(x_1, \dots, x_k, y_1, \dots, y_k)$ satisfying
\begin{equation*}
y_k = \max(y_1,\dots,y_k) = \max(x_1,\dots,x_k) = x_k.
\end{equation*}
Let $N'_{P, 2k}([N])$ be the contribution to $N_{P, 2k}([N])$ from tuples $(x_1, \dots, x_k, y_1, \dots, y_k)$ with
\begin{equation}
\label{COND:y_k-maximal-variable}
y_k = \max(y_1,\dots,y_k) > \max(x_1,\dots,x_k).
\end{equation}
Then by symmetry,
\begin{equation}
\label{EQN:recursive-P-estimate}
N_{P, 2k}([N])
\le k^2\cdot R_{P, 2k}([N])
+ 2k\cdot N'_{P, 2k}([N]).
\end{equation}
Since $R_{P, 2k}([N]) \le N\cdot N_{P, 2k-2}([N])$ and $R_{P, 2}([N]) = 0$, it follows from \eqref{EQN:recursive-P-estimate} that
\begin{equation}
\label{EQN:max-j-P-estimate}
N_{P, 2k}([N])
\le k \cdot \max_{1\le j\le k}((k^2 N)^{k-j} \cdot 2j\cdot N'_{P, 2j}([N])).
\end{equation}

We now bound $N'_{P, 2k}([N])$.
Let $M(P)$ be a positive integer (depending only on $P$) such that for all integers $n\ge M(P)$, we have
\begin{equation}
\label{COND:n-large-condition-for-M(P)}
P(n) > \max(P(0),P(1),\dots,P(n-1))
\qquad\text{and}\qquad
P(n)\ge n^d/2.
\end{equation}
Let $M\ge M(P)$ and $\lambda\ge 1$ denote integers to be chosen later.
Then by Corollary~\ref{COR:unlikely-P-divisor} (applied with $z := P(y_k)$ for $y_k\ge M$, using the conditions \eqref{COND:y_k-maximal-variable} and \eqref{COND:n-large-condition-for-M(P)}) and the divisor bound, the quantity $N'_{P, 2k}([N])$ is at most
\begin{equation}
M^{2k}
+ O_{k, d, \Delta_Q, \ee}(N^\ee)
\cdot \sum_{M\le y\le N} \left(N^{k-1} G_{P,\lambda}([N], P(y))
+ \frac{N^k}{y^{d/e_P}}+\frac{N^{k-1}}{\lambda^{1/e_P}}+N^{k-2}\right).
\end{equation}
By Lemma~\ref{LEM:BP-bound}, we conclude that
\begin{equation}
N'_{P, 2k}([N])
\le M^{2k}
+ O_{k, d, \Delta_Q, \ee}(N^\ee)
\cdot (\lambda^2 N^{k-1/2}
+ N^kM^{1-d/e_P}+N^k\lambda^{-1/e_P}+N^{k-1}).
\end{equation}
Choosing $M := \floor{M(P)\cdot N^{1/4}}$ (say) and $\lambda := \floor{N^{1/6}}$, we get (since $1-d/e_P\le -1/e_P$)
\begin{equation}
\label{EQN:final-max-P-divisor-bound}
N'_{P, 2k}([N])
\le O_{k, d, \Delta_Q, M(P), \ee}(N^\ee)
\cdot (N^{k/2} + N^{k-1/4e_P} + N^{k-1/6e_P})
\ll_{k, P, \ee} N^{k-1/6e_P+\ee}.
\end{equation}

Finally, plug \eqref{EQN:final-max-P-divisor-bound} into \eqref{EQN:max-j-P-estimate} to get $N_{P, 2k}([N]) \ll_{k, P, \ee} N^{k-1/6e_P+\ee}$.
This, plus the estimate \eqref{INEQ:trivial-P-solutions-estimate} for trivial solutions, establishes the theorem.
\end{proof}

	\bibliographystyle{abbrv}
	\bibliography{CLT}{}
\end{document}